\begin{document}

\newtheorem{theorem}{Theorem}[section]

\newtheorem{lemma}[theorem]{Lemma}
\newtheorem{prop}[theorem]{Proposition}
\newtheorem{cor}[theorem]{corollary}
\newtheorem{remark}{Remark}[section]
\newtheorem*{con}{Conjucture}
\newtheorem{assumption}{Assumption}
\newtheorem*{questionA}{Question}
\newtheorem*{thmA}{Theorem A}
\newtheorem*{thmB}{Theorem B}
\newtheorem{definition}{Definition}[section]

\def\p{\partial}

\def \pr1 {\partial_{R_1}}

\def \gr {\nabla_R}
\def \div {\text{div}\,}
\def \divr {\text{div}_R}
\def \d {\,\mathrm{d}}
\def \dx {\,\mathrm{d}x}
\def \dr {\,\mathrm{d}R}
\def \dri {\,\frac{\mathrm{d}R}{\psi_\infty}}
\def \dt {\frac{\d }{\d t}}
\def \la {\Lambda}
\def \ls {{\g^s}}
\def \lsm {{\g^{s-1}}}
\def \lsp {{\g^{s+1}}}

\def \xier {|\xi|^2}

\def \bl {\big\{}
\def \brr {\big\}}
\def \bbl {\Big\{}
\def \bbr {\Big\}}

\def \hshot {{s,\hd}^2}
\def \hthot {{H^2(\hd)}^2}
\def \hthtt {{H^2(\hdt)}^2}
\def \hsltt {{s,\h L^2}^2}
\def \htltt {{H^2(\h L^2)}^2}
\def \holtt {{H^1(\h L^2)}^2}
\def \ltltt {{\h L^2}^2}
\def \lthot {{\hd}^2}
\def \lthtt {{L^2(\hdt)}^2}
\def \hohot {{H^1(\hd)}^2}

\def \hyi {{{\dot{\mathcal H}}^1}}
\def \htnhot {{H^2(\h H^1)}^2}
\def \htnho {{H^2(\h H^1)}}

\def \hdyi {\dot{H}^1}
\def \hser {{s}^2}
\def \herer {{H^2}^2}
\def \hyier {{\hd}^2}
\def \lerer {{\h{L}^2}^2}
\def \hsoer {{H^{s-1}}^2}
\def \g {\nabla}

\def \hsho {{s,\hyi}}
\def \htho {{H^2(\hd)}}
\def \htht {{H^2(\hdt)}}
\def \hslt {{s,\ler}}
\def \hsplt {{H^{s+1}(\h L^2)}}
\def \htlt {{H^2(\h L^2)}}
\def \hoho {{H^1(\hd)}}
\def \ltho {\hyi}
\def \ltht {{L^2(\hdt)}}
\def \holt {{H^1(\h L^2)}}
\def \ltlt {{\h L^2}}
\def \lilt {{L^{\infty}(\h L^2)}}
\def \liho {{L^{\infty}(\hd)}}
\def \liht {{L^{\infty}(\hdt)}}

\def \ep {\varepsilon}

\def \hs {s}
\def \hso {s-1}
\def \her {{H^2}}
\def \ler {{{\h L}^2}}
\def \linf {{L^\infty}}

\def \le {\lesssim}

\def \Ltilt { L^\infty( [0,T]; \hlt ) }
\def \Lttlt { L^2( [0,T]; \hlt ) }
\def \Lttho { L^2( [0,T]; \hd ) }
\def \Lttht { L^2( [0,T]; \hdt ) }
\def \Ltiho { L^\infty( [0,T]; \hd ) }

\def \ltilt { {L^\infty([0,T];  \hlt )} }
\def \lttlt {{ L^2([0,T];  \hlt ) }}
\def \lttho { {L^2([0,T];  \hd )} }
\def \Lttht { {L^2([0,T]; \hdt )} }
\def \ltiho {{ L^\infty([0,T];  \hd }) }

\def \r {\mathbb{R}}
\def \T {\mathbb{T}}
\def \D {\mathbb{D}}
\def \S {\mathbb{S}}
\def \P {\mathbb{P}}
\def \h {\mathcal}
\def \hlt {{\h L^2}}
\def \hd {{{\dot{\mathcal H}}^1}}
\def \hdt {{\dot{\mathcal H}}^2}
\def \pin {\psi_\infty}
\def \vp {\varPsi}
\def \buk {U^\kappa}
\def \uk {u^\kappa}
\def\bpk{\varPsi^\kappa}
\def\pk{\psi^\kappa}

\def \holder {H$\ddot{\text{o}}$lder's }
\def \poin {Poincar$\Acute{\text{e}}$ }
\def \m {{\mathfrak m}}

\def \tbuk {\lt {U^\kappa}}
\def \tuk {\lt {u^\kappa}}
\def \abuk {\lb {U^\kappa}}
\def \auk {\lb {u^\kappa}}

\def \ser {\sum\limits_{k=1}^2 }
\def \serik {\sum\limits_{i,k=1}^2 }
\def \seri {\sum\limits_{i=1}^2 }
\def \serj {\sum\limits_{j=1}^N }
\def \serl {\sum\limits_{l=1}^2 }
\def \serk {\sum\limits_{k=1}^2 }
\def \serij {\sum\limits_{i,j=1}^N }
\def \serjk {\sum\limits_{j,k=1}^N }
\def \serjkl {\sum\limits_{j,k,l=1}^N }
\def \ssan {\sum\limits_{k=1}^3 }
\def \sij {\sum\limits_{i,j=1}^N }
\def \sj {\sum\limits_{j=1}^N }
\def \serjkn {\sum\limits_{j,k=1}^N }

\def \ix {\int_{\mathbb{R}^N}}
\def \is { \int_{S(t)} }
\def \it { \int_0^t }
\def \intox {\int_{\mathbb{R}}}
\def \intoxr {\iint\limits_{{\,\mathbb{R}} {B}}}
\def \intnx {\int_{\mathbb{R}^N}}
\def \intr {\int_B}
\def \ixr {\iint_{\mathbb{R}^{N}\!\times\!{B}}}

\def \ey{\cal E_1 }
\def \ee{\cal E_2}

\def \intdr {\iint\limits_{{\,\Omega} {B}}}
\def \intd {\int_{\Omega}}

\def \hu {\hat u}
\def \hbu {\bar {\hat u}}
\def \F {\cal F}
\def \hp {\hat \psi}
\def \hbp {\bar{\hat \psi}}
\def \re {\cal R e}

\def \rjk {R_jR_k}

\def \cu {\mathbf}
\def \cal{\mathcal}

\def \t {\tilde}
\def \lt {\widetilde}
\def \b {\bar}
\def \lb {\overline}

\bibliographystyle{abbrvnat}

\title[Euler-FENE dumbbell]{Stability and large-time behavior for the N-Dimensional Euler-FENE dumbbell model near an equilibrium}


\author[Z. Yao]{Zheng-an Yao}
\address{
Institute of Advanced Studies Hong Kong,
Sun Yat-sen University,
Hongkong  999077, China}
\email {mcsyao@mail.sysu.edu.cn}

\author{Ruijia Yu}
\address{ 
School of Mathematics,
Sun Yat-sen University,
Guangzhou  510275,
  China}
\email {yurj5@mail2.sysu.edu.cn}

\begin{abstract}
This paper studies the $N$-dimensional FENE dumbbell model without velocity dissipation, focusing on the stability and decay of perturbations near the steady solution $(0,\pin)$.
Due to the lack of velocity dissipation, the above problems are highly challenging. 
In fact, without coupling, the corresponding $N$-dimensional Euler equation near $u=0$ is well known to be unstable.
To overcome this difficulty, we analyze the   wave structure arising in the system governing perturbations around the steady state, which originates from the equilibrium configuration and the coupling effects.
This   wave structure enables us to establish the global stability in the $H^s$-type Sobolev norms.
Also, we highlight the critical role of   wave structure in the decay estimates of the Euler-FENE dumbbell model. By combining this property with the Fourier splitting method, we derive the decay rate which is identical to that of the general FENE dumbbell with velocity dissipation.
\end{abstract}

\keywords{FENE model; global existence;   wave structure; decay estimates}
\subjclass{Primary: 35Q30; Secondary: 76A10, 35B40}

\maketitle

  \section{Introduction}
Fluid-polymer coupled systems are fundamental in physics, chemistry, and biology, drawing significant research interest \cite{bird1,masmoudi-cpam}.  One of these models is the finite extensible nonlinear elastic (FENE) dumbbell model.
 In this model, a polymer is depicted as an "elastic dumbbell," consisting of two "beads" linked by a spring, which is represented by a vector 
$R$ (see \citet{bird1,bird2}, \citet{doi}, \citet{ottinger}).
 At the fluid level, the FENE dumbbell model combines the Navier-Stokes equations describing fluid velocity with a Fokker-Planck equation governing the dynamics of polymer distribution in the liquid medium.
 The micro-macro FENE dumbbell model reads as follows
  \begin{equation}\label{original}
  \begin{cases}
          \p_t u+(u\cdot \nabla) u= \mu \Delta u-\nabla p+\div \tau, \quad \div u = 0,\\
          \p_t \Psi + u\cdot \nabla \Psi
          = \nu \Delta \Psi 
          +\divr\left[
          -\nabla u\cdot R\Psi+\beta\gr \Psi +\gr\, \mathcal U \Psi
          \right ],\\
          \tau_{l,m}=\intr (R_l \p_{R_m} \mathcal U)\Psi(x,R,t)\dr,\\
          u(x,0)=u_0,\quad \Psi(x,R,0)=\Psi_0,\\
          (\beta \gr \Psi +\gr\, \mathcal U \Psi)\cdot n = 0 \quad \text{on} \ \p B(0,R_0).
  \end{cases}
  \end{equation}
  In (\ref{original}), $u(x,t)$ denotes the velocity of the polymeric liquid, and $\Psi(x,R,t)$ denotes the distribution function of the internal configuration, where $x\in \r^N$, $N\geq 2$. The polymer elongation $R$ is bounded in a ball $B(0,R_0)$, which indicates that the extensibility of the polymers is finite.
  Also, the potential $\h U$ is given by $\h U(R)=-k \log ( 1-\frac{|R|^2}{R_0^2} )$ for some $k>0$. 
  In addition, $\nu$ is the center-of-mass diffusion rate of the polymer, and $\beta$ relates to the Boltzmann constant and temperature. 

  As in \cite{masmoudi-cpam}, to ensure the conservation of $\Psi$, we add an additional boundary condition, namely
  $$
  (-\nabla u\cdot R\Psi+\beta\gr \Psi +\gr\, \mathcal U \Psi)\cdot n=0 \quad \text{ on }\p B(0,R_0).
  $$
  This boundary condition implies that $\Psi=0$ on $\p B(0, R_0)$, and if $\intr \Psi_0( x, R ) \dr=1$, then for all $(x,t)$, we have $\intr \Psi( x,R,t ) \dr=1$.

 In this paper, we will set $\beta=1$ and $R_0=1$. 
  Notice that $u=0$ and 
  $$
  \pin(R)=\frac{e^{-\h U(R)}}{\intr e^{-\h U(R)}
\dr }=\frac{(1-|R|^2)^k}{\intr (1-|R|^2)^k \dr}
  $$
  is a trivial solution of (\ref{original}), and we denote 
  $$
  \h L\Psi=\divr ( \gr \Psi +\gr\, \mathcal U \Psi )
  =\divr \left( \pin \gr \frac{\Psi}{\pin} \right).
  $$

 There are plenty of mathematical results about the FENE dumbbell model. 
 If $\nu=0$ and $\mu>0$, \citet{renardy} demonstrated the local well-posedness in Sobolev space for potentials of the form $\h U(R)=(1-|R|^2)^{1-\sigma}$ with some $\sigma>1$. 
 Later, \citet{li-cmp} proved the local existence when $R$ is taken in the whole space and under certain growth conditions on the potential.
 Also, \citet{jourdain} analyzed the local
existence of a stochastic differential equation with potential $\h U(R)=-k \log(1-|R|^2)$ and established local existence for Couette flow, under the assumption $b=2k>6$.
\citet{zhang-arma} proved the local well-posedness of the three-dimensional FENE model in weighted Sobolev spaces for $b=2k > 76$. 
\citet{masmoudi-cpam} developed some novel Hardy-type inequalities to handle the singular term $\div \tau$, and proved local well-posedness and global well-posedness under small assumption near the equilibrium for the FENE model when $b=2k > 0$.
Also near the equilibrium, \citet{lin-cpam} proved global existence under certain constraints on the potential.
Regarding the global existence of weak solutions in $L^2$, \citet{masmoudi-invension} derived results based on some entropy conditions.
Later, Luo and Yin \cite{luo-arma,luo-advance} overcame the Liouville problem and established the $L^2$ decay for the FENE model.

On the other hand, the center-of-mass diffusion of polymers is a well-established physical phenomenon and has been extensively studied.
 In this case, namely $\mu>0$ and $\nu>0$, the global existence of weak solution of the FENE model incorporating center-of-mass diffusion was demonstrated in \cite{barrett}.
Also, \citet{barrett-cutoff} introduced a  “microscopic” cut-off function in the drag term of the Fokker-Planck equation and proved the existence of global-in-time weak solutions to a mollification model with a general class of spring-force potentials, including the FENE potential.
Subsequently, \citet{barrett-mmm} removed the cut-off in \cite{barrett-cutoff} and extended the results to the case of bead-spring chain models.

Regarding the decay of the FENE dumbbell model near the steady state, numerous important results have been obtained when the velocity equation exhibits full dissipation.
 In 2006, \citet{schonbek} investigated the $L^2$ decay of the velocity for the co-rotation FENE dumbbell model, and obtained the decay rate $(1+t)^{ -\frac{N}{4}+\frac{1}{2} }$.
 Luo and Yin \cite{luo-arma,luo-advance} enhanced the $L^2$ decay results developed in \cite{schonbek}  by Fourier methods, and obtained the optimal decay rates for the co-rotation case.
 Later, \citet{luo2024optimal} refined the decay result in \cite{luo-arma} for the general case and obtained the optimal decay rates.
 However, in the absence of explicit velocity dissipation, establishing the decay properties of the FENE dumbbell model becomes highly challenging.

  In this paper, we explore the   wave structure of the Euler-FENE dumbbell model generated by the steady-state $(0,\pin)$ and the coupling and interaction. For this purpose, we set $\mu=0$ and $\nu>0$. Also, we denote $\psi=\Psi-\pin$.
 Since $\nabla_x \pin=0$, we have 
 $$
 \div \tau=\div \intr (R\otimes\gr \h U)\Psi \dr = \div \intr (R\otimes\gr \h U)\psi \dr.
 $$
 Hence, we may assume that
 $$
 \tau= \intr (R\otimes\gr \h U)\psi \dr.
 $$
 Now, we write $\psi_0 :=\Psi_0-\pin$, and the equation governing the perturbation $(u,\psi)$ of $N$-dimensional Euler-FENE dumbbell model reads as follows
  \begin{equation}\label{fene}
      \begin{cases}
          \p_t u+(u\cdot \nabla) u= -\nabla p+\div \tau, \quad \div u = 0,\\
          \p_t \psi + u\cdot \nabla \psi
          = \nu\Delta\psi 
          +\divr\left[
          -\nabla u\cdot R(\psi+\pin)\right ]+\h L \psi
          ,\\
          u(x,0)=u_0,\quad \psi(x,R,0)=\psi_0,\\
          \pin\gr\frac{\psi}{\pin}\cdot n = 0 \quad \text{on} \ \p B(0,1).
      \end{cases}
  \end{equation}
 
This paper comprehensively investigates (\ref{fene}), focusing on the small data global existence and stability, and the decay estimates. The first main result reads as follows.
 
  \begin{theorem}\label{thm-exist}
      Suppose that $k>1$, $s >\frac{N}{2}+1$, there exists a small constant $\varepsilon>0$ such that for $u_0 \in H^s(\r^N)$, $\div u_0 = 0$ and $\psi_0 \in H^s(\r^N; \h L^2)$, if
      \begin{equation}
          ||u_0||_s+||\psi_0||_{s, \h L^2}\leq \varepsilon,
      \end{equation}
      then (\ref{fene}) has a unique global solution $(u,\psi)$. 
      In addition, for all $t > 0$, the following estimates hold:
      \begin{equation*}
      \begin{aligned}
      &||u(t)||_s^2 + ||\psi(t)||_\hsltt
      + \int_0^t
      \bbl
      ||\g u(s)||_{s-1}^2
      +\nu ||\g\psi(s)||_\hsltt
      +||\psi(s)||_\hshot
    \bbr
    \d s\leq C \varepsilon^2.
      \end{aligned}
    \end{equation*}  
  \end{theorem}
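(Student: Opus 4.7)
The plan is to prove Theorem \ref{thm-exist} by a standard continuation: combine local well-posedness in $H^s\times H^s(\mathcal{L}^2)$ with a priori energy estimates that close under the smallness hypothesis. Local existence is routine --- regularize the velocity equation with an artificial viscosity $\varepsilon\Delta u$, run a Picard iteration (using that $s>N/2+1$ makes $H^s$ an algebra and that the polymer equation is uniformly parabolic in $(x,R)$ via $\nu\Delta+\mathcal{L}$), and pass $\varepsilon\to 0$ using uniform bounds from the main estimate.

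For the basic estimate, apply $\partial^\alpha$ ($|\alpha|\leq s$) to both equations; pair $\partial^\alpha u$ with itself in $L^2_x$ and $\partial^\alpha\psi$ with itself in the weighted $\mathcal{L}^2$ pairing $\iint (\cdot)(\cdot/\pin)\,dR\,dx$. The operator $\mathcal{L}$ is symmetric in this pairing and yields the equilibrium-induced dissipation $\|\psi\|_{H^s(\dot{\mathcal{H}}^1)}^2$, while $\nu\Delta_x$ produces $\nu\|\nabla\psi\|_{H^s(\mathcal{L}^2)}^2$. Substituting $\tau=\intr(R\otimes\gr\mathcal{U})\psi\,dR$ gives the critical linear cancellation
\begin{equation*}
\int_{\mathbb{R}^N}\div\tau\cdot u\,dx + \iint\divr(-\nabla u\cdot R\pin)\cdot\frac{\psi}{\pin}\,dR\,dx = 0,
\end{equation*}
so only the quadratic coupling $\divr(-\nabla u\cdot R\psi)$ remains, and this is controlled by Masmoudi's Hardy-type inequality $\|\tau\|_{H^s}\lesssim\|\psi\|_{H^s(\dot{\mathcal{H}}^1)}$ (valid precisely because $k>1$); Kato--Ponce commutators handle the transport nonlinearity. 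Under smallness this closes the estimate on $\|u\|_{H^s}^2+\|\psi\|_{H^s(\mathcal{L}^2)}^2$ plus the $\psi$-dissipations --- but it provides no dissipation for $u$.

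The main obstacle is the coupling-induced enhanced dissipation $\int_0^t\|\nabla u\|_{H^{s-1}}^2\,ds$, since the velocity equation lacks any viscous term. The idea is that the source $-\divr(\nabla u\cdot R\pin)$ in the polymer equation is linear in $\nabla u$, so dissipating $\psi$ indirectly penalizes $|\nabla u|^2$. Concretely, introduce a cross functional
\begin{equation*}
\mathcal{I}(t) = -\eta\sum_{|\alpha|\leq s-1}\int_{\mathbb{R}^N}\partial^\alpha\tau:\nabla\partial^\alpha u\,dx,
\end{equation*}
with small $\eta>0$. Differentiating $\mathcal{I}$ in time and substituting $\partial_t\psi$, an integration by parts in $R$ using $\partial_{R_k}\pin=-(\partial_{R_k}\mathcal{U})\pin$ converts the $\nabla u$-source into a coercive contribution $\gtrsim\eta\|\nabla u\|_{H^{s-1}}^2$; positivity uses the radial symmetry of $\pin$, the incompressibility $\div u=0$ (which annihilates two of the three isotropic tensor pieces), and the fact that the relevant moment of $\nabla^2_R\mathcal{U}$ against $\pin$ is finite precisely when $k>1$ --- the same threshold as the Hardy inequality. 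All other contributions to $\dot{\mathcal{I}}$, namely $\eta\|\mathbb{P}\div\tau\|_{H^{s-1}}^2$ from $\partial_t u$, the $\mathcal{L}\psi$ and $\nu\Delta\psi$ terms, and the nonlinear transport terms, are absorbed into $\nu\|\nabla\psi\|_{H^s(\mathcal{L}^2)}^2+\|\psi\|_{H^s(\dot{\mathcal{H}}^1)}^2$ via Hardy inequalities and smallness of the total energy. Adding $\eta\mathcal{I}$ to the baseline produces an equivalent norm whose total dissipation is exactly the one stated in Theorem~\ref{thm-exist}, and a standard continuation argument closes the bootstrap. The most delicate technical point is carrying out this cross-functional analysis at the top derivative order $|\alpha|=s-1$: the commutators $[\partial^\alpha,\mathcal{L}]$ and the top-order transport commutators must stay absorbable in the absence of direct viscous regularization for $u$, which is exactly where the interplay between the equilibrium- and coupling-induced dissipation mechanisms becomes crucial.
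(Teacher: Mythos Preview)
Your overall architecture is right and matches the paper's: a standard $H^s$ energy estimate gives the $\psi$-dissipations but no velocity dissipation, and a cross-functional coupling $\psi$ to $\nabla u$ is used to extract $\int_0^t\|\nabla u\|_{s-1}^2\,ds$. The paper organizes this as two separate inequalities for $\mathcal E_1$ and $\mathcal E_2$ rather than a single modified energy, but that is cosmetic.

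The genuine gap is your choice of cross-functional. With $\mathcal I=-\eta\int\partial^\alpha\tau:\nabla\partial^\alpha u\,dx$, differentiating in time and substituting $\partial_t\psi$ produces a contribution from $\mathcal L\psi$ of the form
\[
\int_B (R_l\,\partial_{R_m}\mathcal U)\,\mathcal L\psi\,dR
\;=\;-\int_B \psi_\infty\,\nabla_R\!\Big(\tfrac{\psi}{\psi_\infty}\Big)\cdot\nabla_R\big(R_l\,\partial_{R_m}\mathcal U\big)\,dR.
\]
Since $\nabla_R(R_l\,\partial_{R_m}\mathcal U)$ carries the singularity $R_l\,\partial_{R_i}\partial_{R_m}\mathcal U\sim(1-|R|^2)^{-2}$, bounding this by $|\psi|_{\dot{\mathcal H}^1}$ via Cauchy--Schwarz requires $\int_B\psi_\infty(1-|R|^2)^{-4}\,dR<\infty$, i.e.\ $k>3$. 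No Hardy inequality in the paper (Lemmas~\ref{le-tau}--\ref{le-hardy}) rescues this for $1<k\le 3$, and $R\otimes\nabla_R\mathcal U$ is \emph{not} an eigenfunction of the FENE generator, so there is no algebraic cancellation reducing this term to a multiple of $\tau$ as in the Hookean/Oldroyd-B case. Thus your $\mathcal L\psi$ term is not absorbable in the stated range $k>1$.

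The paper avoids this by testing the $\psi$-equation against the \emph{bounded} weight $R_jR_k$ rather than $R_j\partial_{R_k}\mathcal U$: one gets $\int_B\divr(-\nabla u\cdot R\psi_\infty)\,R_jR_k\,dR=\mathcal C\,[\mathbb D u]_{jk}$ with $\mathcal C=2\int_B R_1^2\psi_\infty\,dR>0$, and then $\|\nabla u\|_{s-1}^2=\|\mathbb D u\|_{s-1}^2$ is estimated by pairing this identity with $\mathbb D u$. The $\mathcal L\psi$ contribution now involves only $\nabla_R(R_jR_k)$, which is bounded, so it is controlled by $\|\psi\|_{s,\dot{\mathcal H}^1}$ for every $k>0$; the restriction $k>1$ enters only through the nonlinear term $\divr(-\nabla u\cdot R\,\psi)$ (via Lemma~\ref{le-hardy}), exactly as in your basic estimate. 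Replacing your $\tau$-based cross term by the second-moment cross term $\iint\psi\,(R\otimes R):\mathbb D u\,dR\,dx$ fixes the argument.

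Two minor points: your worry about commutators $[\partial^\alpha,\mathcal L]$ is unfounded, since $\mathcal L$ acts only in $R$ and commutes with $\partial_x^\alpha$; and the sign in your definition of $\mathcal I$ should be checked against the identity $\mathbb P\,\mathrm{div}\!\int_B\divr(-\nabla u\cdot R\psi_\infty)\,R\otimes\nabla_R\mathcal U\,dR=c_2(k)\Delta u$, which (after pairing with $u$) forces $\mathcal I=+\eta\int\tau:\nabla u$ to get the coercive sign.
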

  
  \begin{remark}
    Without the coupling of the polymer flow, (\ref{fene}) reduces to the incompressible Euler equation, which is not stable near $u=0$ even in 2-dimensions \cite{kiselev}. Hence, the   wave structure induced by the equilibrium and by the coupling and interaction plays a crucial role in ensuring the stability.
    To the best of our knowledge, this work provides the \textbf{first rigorous stability result} for the Euler–FENE dumbbell model.
\end{remark}

Next, we demonstrate the importance of the   wave structure in the study of decay estimates of (\ref{fene}).
  \begin{theorem}\label{thm-decay}
      Suppose that the assumptions of Theorem \ref{thm-exist} hold, with $\ep$ chosen to be smaller if necessary.
    Let the initial data $(u_0,\psi_0)$ also satisfy
    $$
    ||u_0||_{L^1}+ ||\psi_0||_{L^1(\ler)} \leq \varepsilon. 
    $$
    Then the following decay estimates hold:
    \begin{equation}
    \begin{aligned}
        &||u(t)||_1\leq \varepsilon C (1+t)^{-\frac{N}{4}},\\
         &||\psi(t)||_{1,\ler} \leq \varepsilon C (1+t)^{-\frac{N+2}{4}}.
    \end{aligned}
    \end{equation}
  \end{theorem}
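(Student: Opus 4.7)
My strategy is to marry the enhanced dissipation mechanism of Theorem \ref{thm-exist} with the Fourier splitting method of Schonbek, while exploiting the spectral gap of the Fokker--Planck operator $\h L$ to obtain the extra half-power of decay for $\psi$. The additional $L^1$ hypothesis gives at once $\|\hat u_0\|_{L^\infty}+\|\hat\psi_0\|_{L^\infty(\ler)}\le\varepsilon$, which will feed every low-frequency estimate.

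\textbf{Steps 1--2: Enhanced-dissipation energy inequality and Fourier splitting.}
I would first promote the basic energy identity at the $H^1$ level by working with a modified energy
$\h E(t)\simeq \|u\|_1^2+\|\psi\|_{1,\ler}^2$
augmented by a small cross term of the shape $\delta(u,\intr R\otimes\gr\h U\,\psi\,\dr)=-\delta(u,\tau)$. This cross term is precisely what converts the coupling $\div\tau$ into an \emph{enhanced} dissipation $\|\g u\|_{L^2}^2$ (the same trick underlying Theorem \ref{thm-exist}). Since $\intr\psi\,\dr=0$ is preserved by the evolution, the weighted Poincaré inequality for $\dri$ yields a spectral gap $\|\psi\|_{\hd}^2\gtrsim\|\psi\|_{\ler}^2$, so that $\psi$ effectively has a mass. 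Combining these ingredients produces
\begin{equation*}
\frac{d}{dt}\h E+c_0\bigl(\|\g u\|_{L^2}^2+\nu\|\g\psi\|_{1,\ler}^2+\|\psi\|_{1,\ler}^2\bigr)\le 0.
\end{equation*}
Introducing the Schonbek ball $S(t)=\{\xi:|\xi|^2\le K/(1+t)\}$ and using $\|\g u\|_{L^2}^2\ge g(t)\bigl(\|u\|_{L^2}^2-\int_{S(t)}|\hat u|^2\,d\xi\bigr)$ together with the analogous inequality for $\psi$, this becomes
\begin{equation*}
\frac{d}{dt}\h E+\frac{Kc_0}{1+t}\h E\le \frac{C}{1+t}\int_{S(t)}\!\bigl(|\hat u(\xi)|^2+\|\hat\psi(\xi,\cdot)\|_{\ler}^2\bigr)d\xi.
\end{equation*}
Choosing $K$ with $Kc_0>(N+2)/2$ and multiplying by $(1+t)^{Kc_0}$ reduces everything to bounding the low-frequency content of $(\hat u,\hat\psi)$.

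\textbf{Step 3: Pointwise bounds on $(\hat u,\hat\psi)$ and closure.}
In each Fourier slice the linearized system is a $2\times 2$ block coupling $i\xi\cdot\hat\tau$ with $\divr(-i\xi\,\hat u\cdot R\pin)$ together with the dissipative piece $-\nu|\xi|^2+\h L$; I would verify that it is hypocoercive with contraction rate $\sim\min(1,|\xi|^2)$, which is the mode-by-mode avatar of the mechanism in Theorem \ref{thm-exist}. A Duhamel argument, combined with the $H^s$ smallness supplied by Theorem \ref{thm-exist} to absorb the quadratic terms $u\cdot\g u$, $u\cdot\g\psi$ and $\divr[\g u\cdot R\psi]$, then gives the uniform-in-time bounds
\begin{equation*}
\|\hat u(t)\|_{L^\infty_\xi}\le C\varepsilon,\qquad \|\hat\psi(t,\xi,\cdot)\|_{\ler}\le C\varepsilon\bigl(|\xi|+e^{-ct}\bigr),
\end{equation*}
where the extra factor $|\xi|$ in the $\hat\psi$ estimate comes from the source $\divr(-i\xi\hat u\cdot R\pin)$ combined with the $\h L$ spectral gap. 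Plugging these into Step 2 gives $\int_{S(t)}|\hat u|^2\,d\xi\le C\varepsilon^2(1+t)^{-N/2}$ and $\int_{S(t)}\|\hat\psi\|_{\ler}^2\,d\xi\le C\varepsilon^2(1+t)^{-(N+2)/2}$, and Grönwall then yields $\|u(t)\|_{L^2}\le C\varepsilon(1+t)^{-N/4}$ and $\|\psi(t)\|_{\ler}\le C\varepsilon(1+t)^{-(N+2)/4}$. Interpolating these $L^2$-type rates against the $H^s$ bound of Theorem \ref{thm-exist} upgrades them to the $\|u\|_1$ and $\|\psi\|_{1,\ler}$ decays claimed in Theorem \ref{thm-decay} (possibly after further shrinking $\varepsilon$).

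\textbf{Main obstacle.}
The genuine difficulty is Step 3: with $\mu=0$ there is no velocity diffusion, so propagating $\|\hat u(t)\|_{L^\infty_\xi}\le C\varepsilon$ in time forces one to extract dissipation for $\hat u$ solely from the coupling with $\hat\psi$. The linear part of that task is a hypocoercivity check at every frequency, and the nonlinear part requires a careful bootstrap in which the Fourier $L^\infty$ norm of each nonlinear term is paid for by the smallness of Theorem \ref{thm-exist} times a time-integrable factor. Once this Fourier-pointwise control is in hand, the Schonbek splitting of Steps 1--2 and the Grönwall closure of Step 3 proceed by standard manipulations.
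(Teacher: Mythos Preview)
Your overall architecture---modified energy with a cross term to manufacture $\|\nabla u\|^2$, then Fourier splitting, then a low-frequency estimate---matches the paper's, but each building block is implemented differently, and the step you flag as the main obstacle is one the paper simply bypasses.

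\textbf{Cross term and low frequencies.} The paper's cross term is not of the shape $(u,\tau)$; it is $-2a\iint(\psi\,R\otimes R):\D u\,\d R\,\d x$, i.e.\ the \emph{second moment} of $\psi$ against the symmetric gradient. This is precisely identity (\ref{du}) (the actual mechanism behind Theorem~\ref{thm-exist}): testing the $\psi$-equation against $R_jR_k$ converts the equilibrium source $\divr(-\nabla u\cdot R\,\pin)$ into $\cal C\,[\D u]_{jk}$ by the radial symmetry of $\pin$, so the time derivative of the cross term spits out $-2a\cal C\|\D u\|^2$ with only lower-order remainders. For the low-frequency piece the paper does \emph{not} do mode-by-mode hypocoercivity or Duhamel to reach $\|\hat u\|_{L^\infty_\xi}\le C\varepsilon$. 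It just writes the pointwise-in-$\xi$ energy identity $\tfrac12\partial_t(|\hat u|^2+|\hat\psi|_{\ler}^2)+\nu|\xi|^2|\hat\psi|_{\ler}^2+|\hat\psi|_{\hd}^2=(\text{nonlinear})$, integrates over $[0,t]\times S(t)$, bounds the initial contribution by $|S(t)|\,\varepsilon^2\lesssim\varepsilon^2(\eta+t)^{-N/2}$ from the $L^1$ hypothesis, and controls the three nonlinear remainders using only the time-integrated smallness from Theorem~\ref{thm-exist}; the transport piece $\int_0^t\!\int_{S(t)}|\Re(\widehat{u\cdot\nabla u}\cdot\bar{\hat u})|$ is bounded by $C\varepsilon\,t\,\|\nabla u\|^2$ and re-absorbed into the retained dissipation $\tfrac12 a\cal C\,d(t)\|\D u\|^2$ on the left. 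This entirely avoids your ``main obstacle''.

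\textbf{The extra half-power for $\psi$, and a gap.} The paper does not use a pointwise bound $|\hat\psi|_{\ler}\lesssim\varepsilon(|\xi|+e^{-ct})$. It first reruns the weighted argument at the $\dot H^1$ level to obtain $\|\nabla u\|_1^2+\|\nabla\psi\|_{1,\ler}^2\lesssim\varepsilon^2(\eta+t)^{-N/2-1}$, and then the standalone $\psi$-energy $\tfrac12\partial_t\|\psi\|_{\ler}^2+\|\psi\|_{\hd}^2\lesssim\|\nabla u\|^2$ together with Poincar\'e closes to $\|\psi\|_{\ler}^2\lesssim\varepsilon^2(\eta+t)^{-N/2-1}$. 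Finally, your last line---interpolating the $L^2$ rates against the uniform $H^s$ bound to reach $H^1$---is a genuine gap: interpolation would yield only $\|u\|_1\lesssim(1+t)^{-N(s-1)/(4s)}$, strictly slower than claimed. Since your Steps~1--2 already place $\h E$ at the $H^1$ level, the Gr\"onwall there gives the $H^1$ rate for $u$ directly, and for $\psi$ one needs the separate bootstrap above, not interpolation.
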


\begin{remark}
    The obtained velocity decay rate aligns with that of the viscoelastic FENE dumbbell models, namely (\ref{original}) with $\mu>0$ and $\nu=0$ \cite{luo2024optimal}.
    Also, due to the   wave structure, the velocity governed by this forced incompressible Euler equation exhibits the same decay rates as the incompressible Navier-Stokes equations.
    We suggest that this decay rate is optimal, as it coincides with that of the heat equation.
\end{remark}
 
\subsection{Main difficulties and strategies}
In this part, we explain the main difficulties we encountered during our proofs and our main strategies. 
\subsubsection{Global existence and stability}
In this part, a major obstacle arises from the absence of the velocity dissipation. This greatly complicates the analysis and makes the issues of global existence and stability non-trivial.

 Without coupling the equation of polymer density, (\ref{fene}) becomes the Euler equation:
  
    \begin{equation}\label{ns-2d}
      \begin{cases}
          \p_t u+(u\cdot \nabla) u= -\nabla p,\\
          \div u = 0,
      \end{cases}
  \end{equation}
 \citet{kiselev} proved that the gradient of vorticity could grow double exponentially in time. Consequently, in general, in the absence of coupling with the polymer equation, (\ref{ns-2d}) is not stable near $u=0$.
Therefore, when analyzing the stability of the Euler–FENE dumbbell model, the absence of velocity dissipation prevents the following standard $H^s$ energy framework from closing the estimates:
   \begin{equation*}\label{enu1}
     \begin{aligned}
         \ey(t)=&\sup\limits_{0\leq s\leq t}||u(s)||_s^2
         +\sup\limits_{0\leq s\leq t}||\psi(s)||_{s,\ler}^2
         +\int_0^t 
         \bbl
        \nu||\g\psi(s)||_{s,\ler}^2+ ||\psi(s)||_{s,\hyi}^2 
         \bbr
         \d s,
     \end{aligned}
 \end{equation*}
  but also need to explore the   wave structure of (\ref{fene}) and discover the additional regularity of $u$.
   To better illustrate the wave structure of (\ref{fene}), we consider the two-dimensional case.
  Let $\P = I - \nabla\Delta^{-1}\div$ denote the  Leray projection onto divergence free vector fields. By applying the Leray projection $\P$ to the first equation of (\ref{fene}) to eliminate the pressure term, we obtain
  \begin{equation}\label{linear-u}
      \p_t u  = \P\div \tau +\cal N_1,\quad
      \cal N_1=- \P ( u\cdot\nabla u ).
  \end{equation}
  Now, by integrating the second equation of (\ref{fene}) with $R\otimes\gr \h U$, and applying $\P \div$, we have
  \begin{equation}\label{divtao}
  \begin{aligned}
      \p_t\P \div\tau =&\nu \Delta \P \div \tau 
      +\P \div \intr \h L\psi  R\otimes \gr \h U \dr \\
      &+\P \div \intr \divr(-\nabla u\cdot R\pin )R\otimes \gr \h U \dr 
      + \cal N_2,
  \end{aligned}
  \end{equation}
  where
    \begin{equation*}
    \begin{aligned}
      \cal N_2=& - \P \div\left\{u\cdot\nabla \intr \psi R\otimes \gr \h U \dr\right\} \\
      &+ \P \div \intr \divr(-\nabla u\cdot R\psi) R\otimes \gr \h U \dr.
    \end{aligned}
    \end{equation*}
 Using the axisymmetric of $\pin$ and $\div u =0$, we have
 \begin{equation*}\label{laplace-u}
 \begin{aligned}
     &\P \div \intr \divr(-\nabla u\cdot R\pin) R\otimes \gr \h U \dr\\
     =&
     \P\begin{pmatrix}
         c_1(k)\p_1^2u_1+2c_2(k)\p_1\p_2 u_2+c_2(k)\p_2^2 u_2\\
         c_2(k)\p_1^2 u_2+2c_2(k)\p_1\p_2 u_1 +c_1(k)\p_2^2 u_2 
     \end{pmatrix}\\
     =&
     \P\begin{pmatrix}
         c_1(k)\p_1^2 u_1-2c_2(k)\p_1^2u_1+c_2(k)\p_2^2 u_2\\
         c_2(k)\p_1^2 u_2-2c_2(k)\p_2^2 u_2+ c_1(k)\p_2^2 u_2
     \end{pmatrix}
     =c_2(k) \Delta u,
 \end{aligned}
 \end{equation*}
 where 
 $$
 c_1(k)=-2k\intr \frac{R_1^3\p_{R_1}\pin} {1-|R|^2} \dr,\quad
 c_2(k)=-2k\intr \frac{R_1^2R_2\p_{R_2}\pin} {1-|R|^2} \dr, $$
 and $ c_1(k) $, $c_2(k)$ are integrable if $k>1$, and satisfy
  $c_1(k)=3c_2(k)>0$.

 Hence, by differentiating (\ref{linear-u}) and (\ref{divtao}) in time, we obtain that 
 \begin{equation}\label{wave-1}
     \begin{cases}
         \displaystyle\p_t^2 u- \nu   \p_t \Delta u 
         -c_2(k)\Delta u 
         + \P \div \intr \h L\psi  R\otimes \gr \h U \dr =\cal N_3,\\
         \displaystyle\p_t^2 \Upsilon
         -\nu \p_t \Delta \Upsilon
         -c_2(k)\Delta \Upsilon
         -\p_t \P \div \intr \h L\psi  R\otimes \gr \h U \dr=\cal N_4,
     \end{cases}
 \end{equation}
  where
  $$
  \cal N_3= (\p_t-\nu\Delta)\cal N_1 +\cal N_2
  ,\quad \cal N_4=c_2(k)\Delta \cal N_1 + \p_t \cal N_2.
  $$
In particular, (\ref{wave-1})  reveals the dissipation mechanism for $u$, which plays a crucial role in ensuring the velocity stability. More precisely, thanks to the wave structure (\ref{wave-1}), we can extract the temporal integrability of $\g u$, which makes it promising to give the uniform bound of the following energy structure:
 \begin{equation*}\label{e2-intro}
     \ee(t)=\int_0^t ||\g u(s)||_{s-1}^2 \d s.
 \end{equation*}

 In addition to understanding the   wave structure of $u$ from the wave structure, there is another simple way to derive (\ref{e2-intro}) without introducing the singularity. 
 In fact, by integrating the second equation of (\ref{fene}) with $R_kR_l$, where $k,l=1,\cdots,N$, we obtain
 \begin{equation}\label{du-temp1}
     \begin{aligned}
             &\intr \p_t \psi R_kR_l \dr +\intr u\cdot\nabla \psi R_kR_l \dr\\
            =&\nu \intr \Delta \psi R_kR_l \dr
             + \intr \divr (\pin \gr \frac{\psi}{\pin})R_kR_l \dr\\
             &+ \intr \divr (-\nabla u \cdot R \psi)R_kR_l \dr
             +\intr \divr (-\nabla u \cdot R \pin)R_kR_l \dr.
     \end{aligned}
 \end{equation}
 Due to the symmetry of $\pin$, we deduce that the last term of (\ref{du-temp1}) is equivalent to a constant $\cal C$ times $\D u$:
 \begin{equation*}\label{du-temp2}
     \begin{aligned}
        &\intr \divr (-\nabla u \cdot R \pin)R_kR_l \dr\\
         =  &\serij \intr \p_ju_iR_j\pin \p_{R_i}(R_kR_l) \dr\\
         =& \serj \intr
         \left\{
         \p_j u_k R_j \pin R_l+ \p_j u_l R_j\pin R_k 
         \right\}
         \dr\\
         = &\Big (2\intr R_1^2\pin \dr\Big) [\D u]_{k,l},
     \end{aligned}
 \end{equation*}
 where $[\D u]_{k,l}=\frac{1}{2}( \p_k u_l+ \p_l u_k )$.
  By plugging the above equation into (\ref{du-temp1}), we find
  \begin{equation}\label{du}
  \begin{aligned}
      \cal C [\D u]_{j,k}=&
       \intr \p_t \psi R_kR_l \dr +\intr u\cdot\nabla \psi R_kR_l \dr\\
       &-\nu \intr \Delta \psi R_jR_k \dr
        -\intr \divr (\pin \gr \frac{\psi}{\pin})R_jR_k \dr\\
       & - \intr \divr (-\nabla u \cdot R \psi)R_jR_k \dr,
  \end{aligned}
  \end{equation}
  where $\cal C$ is a constant.
  Since $\div u=0$, we have $||\D u||_s=||\g u||_s$. 
  Hence, the time integrability of $\nabla u$ can be estimated by the terms on the right-hand side of (\ref{du}). The reasoning above explains our strategy on how to prevent the growth of the Sobolev norms of velocity by exploiting the stabilizing effect of $\psi$ on the fluid.

\subsubsection{Decay}
The absence of velocity dissipation also causes significant difficulty when considering the decay property, especially for the velocity. 
To overcome this difficulty, we take advantage of the   wave structure of the velocity, and combine (\ref{du}) with the $L^2$-estimates.
However, the involvement of (\ref{du}) induces linear terms with time-derivative, namely
$$
\p_t\ixr ( \psi R\otimes R):\D u\dr\dx. 
$$
To handle the above linear term, we innovatively introduce the $\dot H^1$-terms 
 and construct a two-layer estimate (see (\ref{equ-decay})). By combining (\ref{equ-decay}) with Fourier splitting estimates, we obtain the decay rate that is identical to the heat equation. 

\medskip
The rest of this paper is organized as follows. 
 In Section  \ref{sec-pre}, we introduce some notations and list several Lemmas that will be frequently used.
 Section \ref{sec-global existence} proves the global existence and stability.
 Section \ref{sec-decay} is devoted to the decay estimates of (\ref{fene}).
  
  \section{Preliminaries}\label{sec-pre}
   In this section, we introduce some notations and lemmas that we shall use throughout the paper.
   
    \subsection{Notations}
    In this paper, we will use the following notations. We use $f\le g$ to denote $f\leq C g$. 
        For the norm of Sobolev spaces in $x$-variable, we denote
$$
||f||_m :=||f||_{H^m},\quad
||f|| :=||f||_0,\quad
|||f|||:=||f||_{L^\infty}.
$$
    Also, we use the abbreviation $B=B(0,1)$, and $\p_{R_i}$, $\divr$ and $\gr$ denote the derivative, divergence and gradient in $R$-variable, respectively.
    We define the following Hilbert spaces in $R$-variable
    $$
    \begin{aligned}
    \h L^2 &= L^2({\dr/\pin })=\left\{
    \psi \ \Big| \ |\psi|^2_{\h L^2}=\intr |\psi|^2\dri<\infty
    \right\},\\
    \hd &= \left\{
    \psi \ \Big| \ 
   |\psi|_\hd^2 = \intr \pin \Big | \gr\frac{\psi}{\pin} \Big |^2 \dr < \infty
\right\},
    \end{aligned}
    $$

Also, for $\alpha\in \mathbb N^N$, $\g^\alpha$  denotes  $\alpha_1$ derivatives in $x_1$, $\cdots$, $\alpha_N$ derivatives in $x_N$.
With the above notations, we are able to 
 define the norms involving $x$ and $R$. For $s\geq 0$,
$$
\begin{aligned}
||\psi||_\hsltt&=\sum\limits_{|\alpha|\leq s}\ixr \left|\g^\alpha \psi\right|^2\dri\dx,\\
||\psi||_\hshot&=\sum\limits_{|\alpha|\leq s} \ixr \pin \left |\g^\alpha \gr \frac{\psi}{\pin}\right|^2\dr\dx.
\end{aligned}
$$
Also, we set
$$
||f||_\ler:=||f||_{0,\ler},\quad
||f||_\hyi:=||f||_{0,\hyi},
$$
and we use abbreviation $\iint:= \ixr$.

   We define $\Lambda=(-\Delta)^\frac{1}{2}$ to denote the Zygmund operator, where the fractional Laplacian $(-\Delta)^{s}$ is given by the Fourier transform
   $$
   \widehat{(-\Delta)^s}f(\xi)=|\xi|^{2s}\hat f(\xi).
   $$

\subsection{inequalities in $R$-variable}
The first Lemma is the \poin inequality in $R$-variable.
  \begin{lemma}[\cite{masmoudi-cpam}]\label{le-poincare-r}
      Assume that $\psi \in \hyi$ and $\intr \psi=0$, then
      \begin{equation*}
          \intr \frac{\psi^2}{\pin} \dr
          \le \int \pin \left| \gr  \frac{\psi}{\pin}\right|^2 \dr.
      \end{equation*}      
  \end{lemma}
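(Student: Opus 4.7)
The plan is to recognize the claim as the spectral-gap (Poincaré) inequality for the probability measure $d\mu := \pin\dr$ on $B$. Setting $f := \psi/\pin$, the hypothesis $\intr \psi \dr = 0$ becomes the mean-zero condition $\intr f\,d\mu = 0$, and the inequality to prove reads
\begin{equation*}
    \intr f^2 \pin \dr \le \intr |\gr f|^2 \pin \dr.
\end{equation*}

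I would then establish this via a Bakry--\'Emery-type convex-potential argument. Writing $\pin = e^{-\h U}/Z$ with $\h U(R) = -k\log(1-|R|^2)$, a direct computation gives
\begin{equation*}
    \gr^2 \h U(R) = \frac{2k}{1-|R|^2}\, I_N + \frac{4k}{(1-|R|^2)^2}\, R\otimes R \succeq 2k\, I_N
\end{equation*}
pointwise on $B$, so $\h U$ is $2k$-uniformly convex. The Bakry--\'Emery criterion for the Fokker--Planck operator $\h L\psi = \divr\!\bigl(\pin \gr(\psi/\pin)\bigr)$ then yields a Poincar\'e inequality for $\mu$ with constant $1/(2k)$, which is absorbed into the implicit constant in $\le$. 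Concretely, one identifies $-\h L$ with the Friedrichs extension of the Dirichlet form $|\cdot|_\hd^2$ on $\ler$ under the Neumann-type boundary condition $\pin\gr(\psi/\pin)\cdot n=0$ (built into the definition of $\hd$), observes that the kernel is spanned by $\pin$ alone, and reads off the inequality from the orthogonal complement.

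The main obstacle is justifying the $\Gamma_2$-calculus across the degeneracy of $\pin$ on $\p B$: the drift $\gr \h U = 2kR/(1-|R|^2)$ blows up there, so standard smooth-compact-domain Bakry--\'Emery reasoning does not apply verbatim. The fix is to approximate $\psi \in \hd$ by functions supported in $\{|R| \le 1-\delta\}$, run the $\Gamma_2$ identity on the approximants, and pass $\delta\to 0$ using the integrability $\intr \pin \dr < \infty$ (valid for all $k>0$) together with the boundary condition; the vanishing of $\pin$ on $\p B$ kills the boundary terms that otherwise obstruct the integration by parts.

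As a self-contained alternative avoiding Bakry--\'Emery entirely, one can expand $f$ in spherical harmonics, $f(R)=\sum_{\ell,m} f_{\ell,m}(r) Y_{\ell,m}(\omega)$, and reduce the statement to a family of one-dimensional weighted Poincar\'e/Hardy estimates on $[0,1]$ with weight $r^{N-1}(1-r^2)^k$. For $\ell=0$ this is the classical mean-zero Poincar\'e inequality on a bounded interval, while for $\ell\ge 1$ the spherical Laplacian contributes a positive term $\ell(\ell+N-2)\int_0^1 f_{\ell,m}^2\, r^{N-3}(1-r^2)^k\,dr$ that directly dominates $\int_0^1 f_{\ell,m}^2\, r^{N-1}(1-r^2)^k\,dr$, giving the inequality mode by mode. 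Summing over $(\ell,m)$ recovers the full bound without ever invoking the $\Gamma_2$ machinery.
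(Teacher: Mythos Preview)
The paper does not prove this lemma at all: it is stated with a citation to \cite{masmoudi-cpam} and used as a black box, so there is no ``paper's own proof'' to compare against. Your proposal therefore goes strictly beyond what the paper does.

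Your Bakry--\'Emery sketch is correct in substance: the Hessian computation $\gr^2\h U \succeq 2k\,I_N$ is right, and since the ball is convex the criterion yields a Poincar\'e constant $1/(2k)$, absorbed into the paper's $\le$ (which is defined as $\lesssim$). Your caveat about the boundary degeneracy is the genuine technical point, and the cutoff-in-$\delta$ approximation you describe is the standard fix; to make it airtight you would want to verify density of such approximants in $\hd$ under the norm $|\cdot|_\hd$, which follows from the finiteness of $\intr\pin\dr$ and the vanishing of $\pin$ at $\p B$.

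The spherical-harmonics alternative is also workable but rougher than you state. For $\ell\ge 1$ your pointwise bound $r^{N-1}\le r^{N-3}$ gives the mode estimate only once you know $\int_0^1 |f_{\ell,m}|^2 r^{N-3}(1-r^2)^k\,dr<\infty$; in low dimension ($N=2$) this requires the vanishing $f_{\ell,m}(r)=O(r^\ell)$ at the origin, which holds for smooth $f$ but needs a density argument for general $f\in\hd$. For $\ell=0$ you need a genuine one-dimensional weighted Poincar\'e inequality with weight $r^{N-1}(1-r^2)^k$, not merely the unweighted interval result; this is true (e.g.\ by a Muckenhoupt-type criterion or compactness) but is not quite ``classical'' without comment. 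Either route would be acceptable as a proof; the paper itself simply imports the result.
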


 To deal with the singular term $\div \tau$, the main tool is the following Hardy-type inequality. 
 
  \begin{lemma}[\cite{masmoudi-cpam}]\label{le-tau}
      Assume that $\psi \in \ler\cap \hyi$ and $\intr \psi=0$, then
\begin{equation*}
          \left(\intr \frac{|\psi|}{1-|R|}\dr \right)^2
      \le \intr \pin 
      \left |
      \gr\frac{\psi}{\pin}
      \right|^2 \dr.
\end{equation*}
Morevoer, if $k>1$, then  
            \begin{equation*} \label{ineq-hardy}
          \intr \frac{\psi^2}{\pin (1-|R|)^2} \dr
          \le\intr \pin 
      \left |
      \gr\frac{\psi}{\pin}
      \right|^2 \dr.
      \end{equation*}
  \end{lemma}

   \section{Global Existence}\label{sec-global existence}

  In this section, we Investigate the small data global existence and stability of the N-dimensional Euler-FENE dumbbell. Thanks to the   wave structure of (\ref{fene}), we can discover the additional time integrability of $u$. Recall that we define  the energy functionals as follows
  \begin{equation*}\label{nde1}
      \ey (t)=\sup\limits_{0\leq s\leq t}||u(s)||_{s}^2
      +\sup\limits_{0\leq s\leq t}||\psi(s)||_\hsltt
     + \int_0^t \bbl\nu ||\nabla\psi(s)||_\hsltt
     + ||\psi(s)||_\hshot \bbr \d s,
 \end{equation*}
 \begin{equation*}\label{nde2}
      \ee(t)=\int_0^t ||\nabla u(s)||_{s-1}^2 \d s.
 \end{equation*}

 \begin{proof}[Proof of Theorem \ref{thm-exist}]
    By the bootstrap argument (see e.g. P.21 of \cite{tao}), it suffices to prove the following two inequalities:
     \begin{align}
         &\ey (t) \leq  \ey (0) + C  \ey^{\frac{3}{2}} (t) + C  \ee^{\frac{3}{2}}(t),\label{nde1-inequ}\\
         &           \ee(t) \leq C  \ey (0) +  C\ey (t) + C  \ey ^{\frac{3}{2}}(t) + C \ee^{\frac{3}{2}}(t).\label{nde2-inequ}
     \end{align}
     
     We begin with (\ref{nde1-inequ}). By standard $L^2$ energy method, we deduce from integrating by parts that
    $$
    \begin{aligned}
          &\frac{1}{2}\dt (||u||^2 +||\psi||_\ler ^2)
          + \nu ||\nabla \psi||_\lerer+||\psi||_\hyier\\
          = &\iint \nabla u\cdot R \psi\cdot \gr \frac{\psi}{\pin}  \dr\d x\nonumber\\
          \le & ||\nabla u||\,|||
          \psi|||_\ler ||\psi||_\hyi \nonumber
          \leq  ||\psi||_{s,\ler}
          \bl ||\nabla u||^2+||\psi||_\hyier\brr.
    \end{aligned}
    $$
         Here we use the fact that, by integrating by parts and $-\dfrac{\p_{R_i}\pin}{\pin}=\p_{R_i} \h U$, 
     \begin{equation*}\label{linearpart-equal}
     \begin{aligned}
         &\iint \divr (-\g u\cdot R \pin)\psi \dri\dx\\
         =& -\iint \div u\pin\psi  \dr \dx 
         +\sum\limits_{i,j=1}^N \iint \p_j u_i R_j\p_{R_i}\h U  \psi \dr \dx\\
         =  & - \ix \div\tau\cdot u \dx.
     \end{aligned}
    \end{equation*}
    
     Now we consider the $\dot H^s$ estimate. Direct computation shows
      \begin{equation}\label{nde1-estimate}
      \begin{aligned}
               & \frac{1}{2}  \dt (||\ls u||^2 
                +||\ls \psi||_\lerer)
          +  \nu ||\ls \nabla  \psi||_\lerer
          +  ||\ls\psi||_\hyier = I_1 + I_2 + I_3.
      \end{aligned}
      \end{equation}
      where
      $$
      \begin{aligned}
          I_1:=& - \intnx  \ls(u\cdot\nabla u)\cdot\ls u \d x ,\\
          I_2:=& - \iint \ls(u\cdot\nabla\psi)\ls \psi \dri \d x,\\
          I_3:= & \iint \ls \divr ( -\nabla u \cdot R \psi)\ls \psi \dri \d x.
      \end{aligned}
      $$
 Using $\div u=0$, we can bound $I_1$ and $I_2$ as follows
 \begin{equation*}
      \begin{aligned}
     |I_1|&=
     \left |
     \,\intnx  (\ls(u\cdot\nabla u)-u\cdot\nabla \ls u)\ls u\d x
     \right|
      \le ||\nabla u||_{s-1}^2 ||\ls u||,\\
     |I_2|&=
     \left |
    \, \iint  (\ls(u\cdot\nabla \psi)-u\cdot\nabla \ls \psi)\ls \psi \dri\d x
     \right|
       \le ||u||_s ||\psi||_\hshot.
     \end{aligned}
 \end{equation*}
 Finally, we deduce from integrating by parts, $\div u=0$ and Lemma \ref{le-tau} that
 \begin{equation*}
 \begin{aligned}
     |I_3|
     \le & 
     \left |
     \, \iint  \lsm\left[ \nabla u\cdot R
     \left(\pin^{\frac{1}{2}}\gr
     \frac{\psi}{\pin}\right)
     \right]\pin^{-\frac{1}{2}}\lsp \psi
     \dr \d x
     \right| \\
     & + 
     \left |
     \, \iint  \lsm\left[ \nabla u\cdot R
     \left(\pin^{-\frac{1}{2}}
     \frac{\psi}{1-|R|}\right)
     \right]\pin^{-\frac{1}{2}}\lsp \psi
     \dr \d x
     \right|  \\
     \leq & 
     ||\g u||_{s-1} || \psi ||_{s,\hyi}
     ||\lsp \psi||_\ltlt\\
     \leq &||u||_\hs
     \bl
     ||\psi||_\hshot + ||\nabla\psi||_{s,\ler}^2
     \brr.
 \end{aligned}
 \end{equation*}
     Plugging the upper bounds of $I_1$, $I_2$ and $I_3$ into (\ref{nde1-estimate}), integrating in time and invoking  the norm equivalence, we find
\begin{align*}
    &||u||_\hser+||\psi||_\hsltt
    +\nu \int_0^t \bbl ||\nabla \psi(s)||_\hsltt +||\psi(s)||_\hshot \bbr \d s\\
    \leq  & ||u_0||_\hser+||\psi_0||_\hsltt
    + C \sup\limits_{0\leq s\leq t} ||u||_\hs 
    \int_0^t ||\nabla \psi(s)||_\hsltt +||\psi(s)||_\hshot \d s\\
    &+ C \sup\limits_{0\leq s\leq t} ||u||_\hs 
    \int_0^t \bbl ||\nabla u(s)||_{s-1}^2 \bbr \d s\\
    \leq & \ey (0) + C  \ey (t)^{\frac{3}{2}} + C  \ee(t)^{\frac{3}{2}},
\end{align*}
and (\ref{nde1-inequ}) is proved.

 Now we turn to take advantage of wave structure and prove (\ref{nde2-inequ}). 
 We deduce from the equation of $\D u$ in (\ref{du}) that
      \begin{equation}\label{Du-define-nd}
      \begin{aligned}
         ||\D u||_{s-1}^2 
         = & (\D u,\D u)+ \serl(\lsm\D u,\lsm\D u)
        = \sum\limits_{i=1}^5 J_i,
        \end{aligned}
        \end{equation}
    where
    $$
    \begin{aligned}
        J_1:=&\serjkn \iint \dt \psi R_j R_k [\D u]_{j,k}\dr \d x\\
       & + \serjkn \iint \dt \lsm \psi R_j R_k \lsm[\D u]_{j,k}\dr \d x,\\
        J_2 :=& \serjkn \iint u\cdot \nabla \psi \rjk [\D u]_{j,k}\dr \d x\\
        & + \serjkn \iint \lsm( u\cdot \nabla \psi \rjk) \lsm [\D u]_{j,k}\dr \d x,  \\
        J_3:= &\serjkn \iint \Delta \psi \rjk [\D u]_{j,k}\dr \d x\\
        & + \serjkn \iint \Delta \lsm \psi \rjk \lsm [\D u]_{j,k}\dr \d x,  \\
        J_4:=& \serjkn \iint \divr(-\nabla u \cdot R \psi)\rjk [\D u]_{j,k}\dr \d x\\
        &   +\serjkn\iint \lsm\divr (-\nabla u \cdot R \psi)\rjk \lsm [\D u]_{j,k}\dr \d x, \\
        \end{aligned}
        $$
        $$
        \begin{aligned}
        J_5:=& \serjkn  \iint \divr \Big( \pin \gr \frac{\psi}{\pin}\Big )\rjk[\D u]_{j,k}\dr \d x\\
        & +\serjkn \iint \lsm\divr \Big( \pin \gr \frac{\psi}{\pin}\Big )\rjk\lsm[\D u]_{j,k}\dr \d x.
    \end{aligned}
    $$

      To estimate $J_1$, we split it into four terms $J_1=J_{1,1}+J_{1,2}+J_{1,3}+J_{1,4}$, where
    \begin{align*}
    J_{1,1}&:=\serjkn \dt \iint \psi \rjk [\D \P u]_{j,k} \dr \d x ,\\
    J_{1,2}&:=- \serjkn \iint \psi\rjk \dt [\D \P u]_{j,k} \dr \d x,\\
    J_{1,3}&:=\serjkn \dt \iint  \lsm \psi \rjk \lsm [\D \P u]_{j,k} \dr \d x, \\
    J_{1,4}&:=- \serjkn \iint \lsm \psi\rjk \dt \lsm [\D\P u]_{j,k} \dr \d x.
     \end{align*}

       By integrating in time, we have
     \begin{align*}
         \left |\int_0^t J_{1,1} \d s \right | 
         &\leq  \serjkn \left| \iint \psi \rjk [\D \P u]_{j,k} \dr \d x  \right |
         +||\psi_0||_\ltlt||u_0||_\hyi\\
         &\leq ||u||_1^2 + ||\psi||_\ltltt  
         + ||u_0||_1^2 +  ||\psi_0||_\ltltt ,\\
        \left |\int_0^t J_{1,3} \d s \right | 
         &\leq  \serjkn \left| \iint  \lsm \psi \rjk \lsm  [\D \P u]_{j,k} \dr \d x  \right |
         +||\lsm\psi_0||_\ltlt||\ls u_0|| \\
         &\leq ||\ls u||^2 + ||\lsm \psi||_\ltltt
         +||\ls u_0||^2 + ||\lsm \psi_0||_\ltltt,
     \end{align*}   
     By substituting the first equation of (\ref{fene}) into $J_{1,2}$ and using Lemma \ref{le-poincare-r}, we obtain
     \begin{align*}
         J_{1,2}
         = & \serjkn \iint \psi \rjk 
         \left[
         \D (u\cdot \nabla u) -\D \P \div \tau 
         \right ]_{j,k} \dr\d x \\
         \leq & ||\psi||_\ltlt ||\nabla u||_{s-1}^2
         +||\psi||_{2,\hyi}^2.
     \end{align*}
     Using integration by part, $J_{1,4}$ can be treated similar to $J_{1,2} $:     
     \begin{align*}
         J_{1,4}
         = & \serjkn \iint \lsm  \psi \rjk 
          \lsm \left[
         \D (u\cdot \nabla u) -\D \P \div \tau 
         \right ]_{j,k} \dr\d x \\
         \leq & ||\psi||_{s,\ler} ||\nabla u||_{s-1}^2
         +||\lsp \psi||_\ltltt + ||\ls \psi||_\lthot.
     \end{align*}
     Summing up $N_{1,i}(i=1,2,3,4)$ and integrating in $t$ yields
     \begin{equation}
     \begin{aligned}\label{n1}
        \int_0^t J_1(s) \d s
         \le & ||u_0||_\hser + ||\psi_0||_\hsltt
        +  \sup\limits_{0\leq s\leq t}\bl  || u(s)||_\hser 
        +  ||\psi(s)||_\hsltt\brr \\
         &+ \int_0^t 
           \bbl ||\nabla \psi||_\hsltt  + ||\psi||_\hshot \bbr \d s
         +  \sup\limits_{0\leq s\leq t} ||\psi||_{s,\ler} 
         \int_0^t 
         ||\nabla u||_{s-1}^2 \d s.
     \end{aligned}
     \end{equation}
     $J_2$ and $J_3$ are bounded by
     \begin{equation}
         \begin{aligned}
             J_2\le & |||u|||\,||\nabla \psi||_\ltlt||\nabla u|| 
             + ||\lsm (u\cdot \nabla \psi)||_\ltlt ||\ls u||\\ 
             \le & ||u||_\hs 
             ( ||\nabla u||_{s-1}^2 + ||\psi||_\hshot ),\\
             J_3\leq &  C||\Delta\psi||_\ler ||\nabla u||  
             + C||\lsp \psi||_\ltlt ||\nabla u||_{s-1}\\
             \leq  & C ||\psi||_\hsltt + \frac{1}{4}||\nabla u||_{s-1}^2.
         \end{aligned}
     \end{equation}

     Finally, using integrating by parts in $R$, we obtain
     \begin{equation}
         \begin{aligned}
             J_4=&\iint \pin\gr\frac{\psi}{\pin}\gr(R_jR_k)[\D u]_{j,k} \dr \d x\\
             & +\iint \pin\gr\frac{\ls\psi}{\pin}\gr(R_jR_k)[\D \ls u]_{j,k} \dr \d x\\
             \leq & C ||\psi||_\ltho ||\nabla u|| 
             + C ||\lsm \psi||_\ltho ||\ls u|| \\
             \leq & C ||\psi||_\hshot + \frac{1}{4}||\nabla u||_{s-1}^2,
         \end{aligned}
     \end{equation}
     \begin{equation}\label{n5}
         \begin{aligned}
             J_5=&-\iint \nabla u\cdot R\psi \gr (R_jR_k)[\D u]_{j,k}\dr \d x\\
             &-\iint \lsm (\nabla u\cdot R\psi) \gr (R_jR_k)[\D \lsm u]_{j,k} \dr \d x\\
             \leq & 
             |||\psi |||_\ler
             ||\nabla u||^2              
             + ||\psi||_{s-1,\ler} 
             ||\g u||_{s-1}^2 \\
             \le & ||\psi||_\hslt ||\nabla u||_{s-1}^2.
         \end{aligned}
     \end{equation}

  By integrating (\ref{Du-define-nd}) in time and substituting the estimates (\ref{n1})-(\ref{n5}) into it, we find
  \begin{equation}
      \begin{aligned}
          \ee(t)
          \leq & C ||u_0||_\hser + C ||\psi_0||_\hsltt
        +  C \sup\limits_{0\leq s\leq t} \bl ||u(s)||_s^2
        +  ||\psi(s)||_{s,\ler}^2 \brr \\
        & + C \int_0^t  
        \bbl ||\psi||_\hsho^2 + ||\nabla \psi||_\hsltt \bbr
        \d s\\
        & + C\sup\limits_{0\leq s\leq t} 
        \bl
        ||u(s)||_s
        + ||\psi(s)||_{s,\ler}\brr
         \int_0^t \bbl 
         ||\nabla u||_{s-1}^2 
         + ||\psi||_{s,\hyi}^2
         \bbr \d s\\
         \leq & C\ey (0) + C\ey (t) + C \ey ^\frac{3}{2}(t) + C \ee^\frac{3}{2}(t),
      \end{aligned}
  \end{equation}
  and (\ref{nde2-inequ}) is proved.
 \end{proof}
 
\section{Decay estimates}\label{sec-decay}
In this section, we take advantage of the   wave structure of the velocity and explore the decay rates of (\ref{fene}).
By combining this   wave structure and the Fourier splitting method, we derive the velocity decay rate which matches that of the heat equation.
\begin{proof}[Proof of Theorem \ref{thm-decay}]
    By basic $L^2$ and $\hdyi$ energy estimates, we find
    $$
    \begin{aligned}
    &\frac{1}{2}\dt \bl ||u||_1^2 + ||\psi||_{1,\ler}^2 \brr 
    + \nu ||\g\psi||_{1,\ler}^2
        + ||\psi||_{1,\hyi}^2\\
        =& 
         - \ix \g(u\cdot \g u) : \g u\dx
            -\iint \g (u\cdot\g\psi)\cdot\g\psi\dri\dx\\
        &+\sum\limits_{k=0,1}
        \iint \g^k \divr (\-\g u\cdot R\psi) \g^k \psi\dri\dx.
    \end{aligned}
    $$
    Also, we integrate (\ref{du}) with $\D u$ to obtain
    $$
    \begin{aligned}     
        \h C ||\D u||^2 = &
        \p_t\iint ( \psi R\otimes R):\D u\dr\dx 
        -\iint ( \psi R\otimes R)\cdot\D\P(u\cdot\g u) \dr\dx \\
        &+\iint ( \psi R\otimes R)\cdot\D\P \div \tau \dr\dx 
        +\iint (u\cdot\g\psi R\otimes R):\D u\dr \dx\\
        &-\nu\iint (\Delta\psi R\otimes R):\D u\dr \dx
        -\iint (\h L \psi R\otimes R):\D u\dr\dx\\
        &+\iint(\divr (-\g u\cdot R\psi)R\otimes R):\D u\dr\dx.
    \end{aligned}
    $$

    Gathering the above equations together, we find
    \begin{equation}\label{equ-decay}
        \begin{aligned}
            &\frac{1}{2}\dt
            \bbl 
            ||u||_1^2 + ||\psi||_{1,\ler}^2
            -2 a \iint (\psi R\otimes R ):\D u \dr \dx
            \bbr\\
            & + \nu ||\g\psi||_{1,\ler}^2
            + ||\psi||_{1,\hyi}^2
            +2 a \cal C  ||\D u||^2
            = \cal L + \cal{NL},
        \end{aligned}
    \end{equation}
    where $a$ is a small positive constant that will be determined later, and
    $$\begin{aligned}
    &\begin{aligned}
        \cal L:= &
        2a\iint ( \psi R\otimes R)\cdot\D\P \div \tau \dr\dx
        -2a \nu\iint (\Delta\psi R\otimes R):\D u\dr \dx\\
        &-2a\iint (\h L \psi R\otimes R):\D u\dr\dx,
    \end{aligned}\\
   & \begin{aligned}
        \cal{NL}:= & 
        \ix \g(u\cdot \g u) : \g u\dx
        -\iint \g (u\cdot\g\psi)\cdot\g\psi\dri\dx\\
        &+\sum\limits_{k=0,1}
        \iint \divr (-\g^k u\cdot R\psi)\g^k\psi\dri\dx\\
        & -2a\iint ( \psi R\otimes R)\cdot\D\P(u\cdot\g u) \dr\dx\\
        &+2a\iint (u\cdot\g\psi R\otimes R):\D u\dr \dx\\
        & +2a\iint(\divr (-\g u\cdot R\psi)R\otimes R):\D u\dr\dx.        
    \end{aligned}
    \end{aligned}
    $$
Using integrating by parts, Lemma \ref{le-tau}, and setting $a$ small enough, we can estimate $\cal L$ by
 $$
 \begin{aligned}
 \cal L\leq & C a\iint \pin^{\frac{1}{2}}|\gr\frac{\psi}{\pin}|\,|\g u| \dr\dx\\
 & + 2a \ix |\g\psi|\,|\g\tau| \dx
 + C a\nu \iint |\Delta\psi|\,|\g u| \dri\dx\\
 \leq & \frac{\nu}{4}||\g^2 \psi||_\lerer
 + \frac{1}{4}||\psi||_{1,\hyi}^2
 +\frac{1}{2} a \cal C  ||\D u||^2.
 \end{aligned}
 $$
 Also, by the Theorem \ref{thm-exist} and the smallness of $\ep$, it is direct to deduce
 $$
 \cal {NL}\leq \frac{\nu}{4}||\g^2 \psi||_\lerer
 + \frac{1}{4}||\psi||_{1,\hyi}^2
 +\frac{1}{2} a \cal C  ||\D u||^2.
 $$

 Substituting the above estimates into (\ref{equ-decay}) yields that
 $$
 \dt f(t) + g(t) \leq 0,
 $$
 where
 $$
 \begin{aligned}
 f:= & ||u||_1^2 + ||\psi||_{1,\ler}^2
            -2 a \iint (\psi R\otimes R ):\D u \dr \dx,\\
g:= & \nu ||\g\psi||_{1,\ler}^2
            + ||\psi||_{1,\hyi}^2
            +2 a\cal C  ||\D u||^2.
 \end{aligned}
 $$
 
 To characterize the decay rate, we define
 $$
 d(t)= (\eta+ t)^s,\quad s > 0.
 $$
 By choosing $\eta$ large enough, we have
 $$
 \begin{aligned}
     & d'(t)\bbl  ||\psi||_{1,\hyi}^2 
     + ||\g u||^2
     -2a \iint (\psi R\otimes R):\D u \dr \dx\bbr \\
     \leq & d (t)\bl \frac{1}{2}  ||\psi||_{1,\hyi}^2
     +  a \cal C   ||\D u||^2\brr.
 \end{aligned}
 $$
 Hence, we obtain that 
 $$
 \dt \bl  d(t) f (t) \brr + \frac{1}{2}d(t) g(t)  \leq d'(t) ||u||^2.
 $$
 By setting 
 $S(t):=\left\{\xi: \xier\leq \frac{2}{a \cal C } \frac{d'(t)}{d(t)}\right\}$,
 we have
  \begin{equation}\label{equ-df}
       \dt \bl  d(t) f (t) \brr 
       + \frac{1}{2}a\cal C  d(t) ||\D u||^2
        \leq d'(t) \is | \hu(\xi)|^2 \d \xi .
  \end{equation}

 To estimate $\is | \hu|^2 \d \xi$, we consider (\ref{fene}) in frequency space:
 \begin{equation}\label{equ-fene-f}
  \begin{cases}
      \p_t \hu + \F(u\cdot \g u) 
      + i\xi \hat P 
      = i\xi \cdot\hat \tau ,\\
      \p_t\hp  + \F(u\cdot \g \psi) 
      +\nu \xier \hp + \h L \hp
      = \divr [-\F (\g u\cdot R(\psi+ \pin)) ],\\
      i\xi\cdot\hu = 0.
  \end{cases}   
 \end{equation}
 By integrating (\ref{equ-fene-f}) with $(\hbu,\hbp)$ and subtracting the real parts, we get
 $$
 \begin{aligned}
 &\frac{1}{2} \dt\bl |\hu|^2 + |\hp|_\lerer\brr
 +\nu \xier|\hp|_\lerer + |\hp|_\hyier\\
 = & -\re [ \F (u\cdot\g u)\cdot\hbu ]
 - \re  \intr \F(u\cdot\g \psi ) \hbp\dri 
  - \re \intr  \divr \F ( \g u\cdot R \psi )\hbp \dri.
 \end{aligned}
 $$
 By \holder inequality,
 $$
 \begin{aligned}
 &- \re  \intr \F(u\cdot\g \psi ) \hbp\dri 
  - \re \intr  \divr \F ( \g u\cdot R \psi )\hbp \dri\\
  \leq &\frac{1}{2} |\hp|_\hyier
  + C \intr |\F (u\cdot \g \psi)|^2 \dri
  + C \intr |\F (\g u\cdot R \psi)|^2 \dri,
 \end{aligned}
 $$
 which implies 
 $$
\begin{aligned}
 &\frac{1}{2} \dt\bl |\hu|^2 + |\hp|_\lerer\brr
 \leq |\re ( \F (u\cdot\g u)\cdot\hbu)|
  + C \intr |\F (u\cdot \g \psi)|^2 \dri
  + C \intr |\F (\g u\cdot R \psi)|^2 \dri.
\end{aligned}
 $$
 Integrating the above inequality in $S(t)$ and in time yields
 \begin{equation}\label{equ-f-up}
     \begin{aligned}
         &\is |\hu (t)|^2\d\xi
         + \is |\hp(t)|^2 \d \xi
         \leq \is |\hu (0)|^2\d\xi
         + \is |\hp(0)|^2 \d \xi
         + K_1+K_2+K_3,
     \end{aligned}
 \end{equation}
 where
 $$
\begin{aligned}
 K_1:= &C \it\is \intr |\F (u\cdot \g \psi)|^2 \dri \d\xi\d s,\\
 K_2:=& C \it\is\intr |\F (\g u\cdot R \psi)|^2 \dri\d\xi\d s,\\
  K_3:=&2\it \is |\re ( \F (u\cdot\g u)\cdot\hbu)|\d\xi\d s.
\end{aligned}
 $$
 
 For the initial data, we take advantage of the definition of $S(t)$ to obtain
 $$
 \begin{aligned}
 \is |\hu (0)|^2\d\xi
  + \is |\hp(0)|^2 \d \xi
    \le & \int_0^{\sqrt{\frac{2s}{a \cal C }(\eta + t)} }
    r^{N-1}\d r 
    \bl ||u(0)||_{L^1}^2
    + ||\psi(0)||_{L^1(\ler)}^2\brr\\
    \le & (\eta+ t)^{-\frac{N}{2}}
    \bl ||u(0)||_{L^1}^2
    + ||\psi(0)||_{L^1(\ler)}^2\brr.
 \end{aligned}
 $$
 For $K_1$ and $K_2$, we use Theorem \ref{thm-exist} to obtain
 $$
 \begin{aligned}
     K_1\leq & \is \d\xi \it ||u\cdot\g\psi||_{L^1}\dri\d s\\
     \le &  \is \d\xi \it 
     ||u||^2 ||\g\psi||_\lerer \d s
     \le  \varepsilon^4 (\eta + t)^{-\frac{N}{2}},\\
     K_2\leq & \is \d\xi 
     \it ||\g u\cdot R\psi||_{L^1}\dri\d s\\
     \le  & \is \d\xi \it 
     ||\g u||^2 ||\psi||_\lerer \d s
     \le \varepsilon^4 (\eta + t)^{-\frac{N}{2}}.
 \end{aligned}
 $$
 For $K_3$, we use Hardy-Littlewood-Sobolev inequality and Theorem \ref{thm-exist} to obtain
 $$
 \begin{aligned}
     K_3 &\leq 2 \it ||\Lambda^{-1}(u\cdot\g u)||\,||\g u|| \d s \\
     \leq & C \it ||u\cdot\g u||_{L^{6/5}}||\g u|| \d s
     \leq \ep C t ||\g u||^2.
 \end{aligned}
 $$
 By substituting the above estimates into (\ref{equ-f-up}), we find
 $$
 \is |\hu (t)|^2\d\xi
         + \is |\hp(t)|^2 \d \xi
         \leq C\varepsilon^4 (\eta+t)^{-\frac{N}{2}} 
         + \varepsilon Ct||\g u||^2.
 $$
  Plugging the above inequality into (\ref{equ-df}) and choosing $\ep \leq \frac{a \cal C }{2 C}$ yields
  $$
  \dt\bl d(t) f(t)\brr \leq \varepsilon^4 Cd'(t)(\eta+t)^{-\frac{N}{2}}.
  $$
  Since 
  $
  f \sim ||u||_1^2 + ||\psi||_{1,
  \ler}^2
  $,
  by integrating the above inequality in time, we obtain that
  $$
  ||u||_1^2 + ||\psi||_{1,
  \ler}^2\leq \varepsilon^2 C (\eta+t)^{-\frac{N}{2}}.
  $$

  Next, we take advantage of the higher-order decay to improve the decay rate of $||\psi||_\ler $.
  By the argument parallel to the $L^2$ case, it is not difficult to deduce that
  $$
  \begin{aligned}
  &\dt\bbl d(t) \bbl ||\g u||_1^2 + ||\g \psi||_{1,\ler}^2
            -2 a \iint \g(\psi R\otimes R ):\g\D u \dr \dx \bbr \bbr \\
            \le & d'(t) \is \xier |\hu|^2 \d \xi
            \le \ep^2 d'(t)  (\eta + t)^{-1} (\eta+t)^{-\frac{N}{2}}. 
  \end{aligned}
  $$
  Integrating the above inequality yields that
  \begin{equation}\label{est-gup}
  ||\g u||_1^2 + ||\g \psi||_{1,\ler}^2
  \le \ep^2 (\eta+ t)^{-\frac{N}{2}-1}.
  \end{equation}
 By the standard $L^2$-estimate of $\psi$, we obtain
  $$
  \begin{aligned}
  \frac{1}{2}\dt ||\psi||_\lerer
  +\nu ||\g\psi||_\lerer
  + ||\psi||_\hyier
  \leq |||\g u|||\,||\psi||_\hyier
  + C||\g u ||^2 + \frac{1}{4}||\psi||_\hyier.
  \end{aligned}
  $$
  Combining the above estimates with Theorem \ref{thm-exist} and (\ref{est-gup}) yields that
  $$
  \dt ||\psi||_\lerer + ||\psi||_\hyier 
  \le \ep^2 (\eta + t)^{-\frac{N}{2}-1}.
  $$
  Using Lemma \ref{le-poincare-r}, we have
  $$
  ||\psi||_\lerer
  \leq \ep^2 C(\eta + t)^{-\frac{N}{2}-1}.
  $$
\end{proof}
\section*{Acknowledgment}
Zheng-an Yao was partially supported by
National Natural Science Foundation of China(12126609) and
National Key Research and Development Program of China(2020YFA0712500).

\section*{Data availability}
Data sharing is not applicable to this article as no data sets were generated or analyzed during the current study.

\section*{Declarations}
\subsubsection*{Conflict of interest}
The authors declare that they have no Conflict of interest.
\bibliography{main}

\end{document}